\newtheorem{theorem}{Theorem}
\newtheorem{proposition}[theorem]{Proposition}
\newcommand{\ud}{\mathrm{d}}
\begin{document}

\title[Boundary perturbations and steady states of structured populations]{Boundary perturbations and steady states of structured populations}

\author[\`{A}. Calsina]{\`{A}ngel Calsina}
\address{\`{A}ngel Calsina, Department of Mathematics, Universitat Aut\`{o}noma de Barcelona, Bellaterra, 08193, Spain}
\email{acalsina@mat.uab.es}

\author[J. Z. Farkas]{J\'{o}zsef Z. Farkas}
\address{J\'{o}zsef Z. Farkas, Division of Computing Science and Mathematics, University of Stirling, Stirling, FK9 4LA, United Kingdom }
\email{jozsef.farkas@stir.ac.uk}

\subjclass{92D25, 35L04, 34K30}
\keywords{Structured populations, boundary perturbation, spectral theory of positive operators.}
\date{\today}

\begin{abstract}

In this work we establish conditions which guarantee the existence of (strictly) positive steady states of a nonlinear structured population model. In our framework the steady state formulation amounts to recasting the nonlinear problem as a family of eigenvalue problems combined with a fixed point problem. Amongst other things our formulation requires us to control the growth behaviour of the spectral bound of a family
    of linear operators along positive rays. For the specific class of model we consider here, this presents a considerable
     challenge. We are going to show that the spectral bound of the family
       of operators, arising from the steady state formulation, can be controlled by perturbations in the domain of the
        generators (only). These new boundary perturbation results are particularly important for models exhibiting fertility controlled dynamics. As an important by-product of the application of the  boundary perturbation results we employ here we recover (using
          a recent theorem by H.\,R. Thieme) the familiar net reproduction number (or function) for models with single state at
           birth, which include for example the classic McKendrick (linear) and Gurtin-McCamy (non-linear) age-structured
            models.

\end{abstract}
\maketitle

\section{Recap and motivation}
In recent years we have developed a general framework to treat steady state problems of nonlinear partial differential and partial integro-differential equations, see \cite{CF,CF2,CF3}. We have been mainly interested in studying models of physiologically structured populations (for reference on structured population models see for example the monographs \cite{CUS,Iannelli,MD,W}). The types of nonlinearities naturally arising in these models present considerable challenges. For example a basic size-structured population model can be formulated as a quasi-linear first order hyperbolic partial differential equation with a nonlinear and non-local boundary condition. On the other hand, the method we have developed to treat the steady state problem capitalises on the fact that the models we have considered describe the dynamics of populations, and consequently in our framework we have to study families of positive linear operators, admitting desirable spectral properties. In particular, the operators that na\-tu\-rally arise generate positive strongly continuous semigroups, often with further desirable re\-gu\-larity properties, such as eventual compactness and irreducibility. The spectral theory of these operators is well developed (at least on abstract Lebesgue spaces), see for example the monographs \cite{AGG,Clement,NAG,Sch}.

To start let us briefly recall from \cite{CF2} how we can (re)formulate the positive steady state problem for a nonlinear evolution equation, in general. We are then going to show in the subsequent section how to apply this general framework to a specific class of model exhibiting fertility controlled dynamics. Let $\mathcal{X}$ and $\mathcal{Y}$ be Banach lattices, and let us consider the (parametrised) family of abstract Cauchy problems.
\begin{equation}\label{problem}
\frac{d u}{d t}=\mathcal{A}_{\bf u}\,u,\quad D(\mathcal{A}_{\bf u})\subseteq\mathcal{X},\quad \quad u(0)=u_0.
\end{equation}
Above, for every ${\bf u}\in \mathcal{Y}$, $\mathcal{A}_{\bf u}$ is a linear operator, typically, the generator of a strongly continuous semigroup on the state space $\mathcal{X}$. For many of the concrete applications we have considered (see e.g. \cite{CF,CF2,FGH}) $\mathcal{X}$ can be naturally chosen as the Lebesgue space of integrable functions; while in the simplest case (as for example in case of the model we have considered in \cite{FGH}) we have $\mathcal{Y}=\mathbb{R}$.

The relationship between ${\bf u}\in\mathcal{Y}$-parameter space, and $u\in\mathcal{X}$-state
space, is determined in general by the so-called environmental operator:
\begin{equation}\label{envcond}
E\,:\,\mathcal{X}\to\mathcal{Y},\quad E(u)={\bf u}.
\end{equation}
The latter condition we call the environmental condition, or environmental feedback. From the biological point of view, $E$ determines how the standing population affects (via population level interaction) individual development, hence the terminology (see e.g. \cite{DGKMT,DGM} for more details). From the mathematical point of view, the parametrisation can often be chosen in such a way that $E$ is a positive linear operator, for example an integral operator. In particular, if problem \eqref{problem}-\eqref{envcond} can be set up such that the range of $E$ is contained in $\mathbb{R}^n$ for some $n\in\mathbb{N}$, then we say that problem \eqref{problem} incorporates $n$-dimensional nonlinearity. Otherwise we say that our model incorporates an infinite dimensional nonlinearity. We note that for models with infinite dimensional nonlinearities (e.g. the one we have considered in \cite{CF}) it is often desirable (from the mathematical point of view) to set $\mathcal{Y}=\mathcal{X}$.
Also note that (in particular also in the case when $\mathcal{Y}=\mathcal{X}$), $E$ is not necessarily surjective.

The advantage of the abstract formulation above is that the (positive) steady state problem can be formally written as:
\begin{equation}\label{problemss}
\mathcal{A}_{\bf u}\, u=0\,u,\quad \Psi(E(u))={\bf u};\quad 0\not\equiv u\in\mathcal{X}_+,
\end{equation}
where $\Psi:\mathcal{Y}_+\to\mathcal{Y}_+$ is an appropriately
defined non-linear map. At the same time, (at least) in case of (structured)
population models, the  generators $\mathcal{A}_{\bf u}$ naturally
determine well-posed positive linear evolution problems. There is a
well-developed mathematical framework to analyse these linear
evolution problems, for example using the theory of positive
strongly continuous semigroups.  Solving problem \eqref{problemss}
(that is, establishing the existence of a positive steady state of
our nonlinear model) then amounts to study the spectral behaviour of
the family of operators $\{\mathcal{A}_{\bf u}\,|\,{\bf u}\in
\mathcal{Y}\}$ (note that $\mathcal{Y}$ is an infinite dimensional
vector space, in general), and to prove the existence of a fixed
point of a (possibly set-valued and non-convex) nonlinear map, which
is related to the feedback condition $\Psi(E(u))={\bf u}$ in
\eqref{problemss}. Both of these problems pose different types of
challenges depending on the type of nonlinearity, and the structure
of the model equations, we are dealing with. In the present paper we
are going to tackle one of these important challenges through  a concrete
example, and therefore further develop, and at the same time also
demonstrate, the applicability of the general theory outlined in \cite{CF2}.

It is evident that there are natural models of biological populations which give rise to equations incorporating different types of nonlinearities, in particular both finite and infinite dimensional ones, simultaneously. A case in point is a population exhibiting a hierarchical structure, in which the hierarchy affects individual mortality.  More specifically we have a population in mind in which older (more experienced) individuals possess an advantage when competing for resources, for example food. At the same time we assume that individual fertility rate is modulated by the total population size, due to scramble competition effects.

As a motivating example consider the following age-specific mortality rate $\mu$, and fertility rate $\beta$.
\begin{equation}\label{vitalex}
\mu\left(a,\frac{\int_a^m u(r,t)\,\ud r}{\int_0^m u(r,t)\,\ud r}\right),\quad \beta\left(a,\int_0^m u(r,t)\,\ud r\right).
\end{equation}
Above $a\in [0,m]$ denotes age, while $u(a,t)$ denotes the density of individuals of age $a$ at time $t$. Note that while the mortality of an individual of age $a$ depends on the proportion of older individuals in the population, the fertility of an individual of age $a$ depends on the total number of individuals in the population. In particular, the corresponding age-structured partial differential equation model (see the next section) describes the time evolution of a hierarchic age-structured population, where individuals in the population compete with older individuals for resources, e.g. food, affecting their mortality. Similar hierarchic age-structured population models were in fact already introduced and studied for example by Cushing in \cite{Cushing}. Importantly, here  we assume that individual mortality is affected by the proportion of older individuals, while individual fertility (or the success of mating) is affected by the total population size. This in fact can be viewed as mortality is affected by pure contest competition (mass independent), while fertility is affected by scramble (mass dependent) competition.
Realistic ecological situations may include populations exhibiting cannibalistic behaviour, where the death rate of individuals is modulated by the proportion of older (larger) individuals in the population. 
In general, the effects of frequency dependent survival probabilities have been studied in the case of specific real world (e.g. fish) populations, see e.g. \cite{Olendorf}.

It turns out however that incorporating certain types of nonlinearities (such as the ones above in \eqref{vitalex})  in a
 structured population model, poses some challenges, when trying to apply the abstract framework \eqref{problem}-\eqref{problemss} 
 to study the existence of positive steady states. In particular, our steady state problem formulation in \eqref{problemss} requires, amongst other things, to control the spectral bound $s(\mathcal{A}_{\bf u})$ on positive rays in the parameter space $\mathcal{Y}$. But as we can see $\mu$ is in fact constant along such positive rays, since clearly
\begin{equation*}
\mu\left(\cdot,\frac{\int_\cdot^m u(r,t)\,\ud r}{\int_0^m u(r,t)\,\ud r}\right)\equiv\mu\left(\cdot,\frac{\int_\cdot^m \alpha\,u(r,t)\,\ud r}{\int_0^m \alpha\,u(r,t)\,\ud r}\right), \quad \forall\,\alpha>0.
\end{equation*}

Hence the information how the spectral bound of the generator $\mathcal{A}_{\bf u}$ changes with respect to perturbations of ${\bf u}$ along rays is encoded in the fertility function $\beta$. In general for distributed states at birth models (such as the one we considered in \cite{FGH}), since the recruitment operator appears as a bounded perturbation of the main part of the generator, this does not pose additional problems. However, for single state at birth models, such as the one we consider here, the function $\beta$ only plays a role in the definition of the domain of the unbounded generator $\mathcal{A}_{\bf u}$, and as a consequence any useful information pertaining the growth (or decay) of its spectral bound can only be extracted from its domain $D(\mathcal{A}_{\bf u})$. Our main aim in this work is to show (in the next section) that we can deal with such domain perturbation problems by using the framework of Sobolev towers, as we have already indicated in \cite{CDF}.

As a by-product, using the Sobolev tower construction and the boundary perturbation results contained in \cite{NAG}, together with a  recent result of H.\,R. Thieme from \cite{HT}, we recover the familiar net reproduction number/function. In particular, we rigorously justify that the biologically plausible definition of the net reproduction number (see e.g. \cite{Iannelli}) - for the linear McKendrick model -, and  function (see e.g. \cite{FH}) - for the nonlinear Gurtin-MacCamy age-structured model -, is indeed the spectral radius of an appropriate operator, which arises when we apply the natural splitting to the generator $\mathcal{A}_{\bf u}$. Note that the idea (which we proposed earlier in \cite{CDF}, see in particular the Concluding Remarks section) to lift the age-structured problem into the extrapolated space is crucial, as this allows us to apply the necessary and natural splitting of the generator $\mathcal{A}_{\bf u}$ which is required to apply Thieme's theorem and to arrive at the biologically plausible net reproduction number/function (see also \cite{Farkas2018} for more details in the direction of net reproduction functions).

\section{Boundary perturbations - through a concrete example}

Our general goal in this section is to show how the spectral bound of the family of operators $\{\mathcal{A}_{\bf u}\,|\,{\bf u}\in\mathcal{Y}_+\}$ can be controlled (along positive rays in $\mathcal{Y}$) by perturbations in the domain of $\mathcal{A}_{\bf u}$ only. We are going to demonstrate this through a concrete example, but the approach we present here is quite general and can be applied to a variety of models with non-local boundary conditions (single state at birth).  Here as an example we consider a model of a hierarchic age-structured population with mortality and fertility rates as described in \eqref{vitalex}. In particular, with the specific choice of the vital rates in \eqref{vitalex} the familiar (single state at birth) Gurtin-McCamy-type age-structured population model reads as follows.
\begin{align}
p_t(a,t)+p_a(a,t)= & -\mu\left(a,\frac{\int_a^m p(r,t)\,\ud r}{\int_0^m p(r,t)\,\ud r}\right)p(a,t),\quad a\in (0,m), \quad m<\infty, \label{heq1} \\
p(0,t)=& \int_0^m \beta\left(a,\int_0^m p(r,t)\,\ud r\right)p(a,t)\,\ud a, \quad t>0,\quad p_0(a):=p(a,0). \label{heq2}
\end{align}

We set the state space as $\mathcal{X}=L^1(0,m)$, and we note that our model \eqref{heq1}-\eqref{heq2} is highly nonlinear.
In particular the nonlinearities are induced by two essentially different type of competition effects (scramble and contest), which are incorporated in the model via the two operators $F_1$ and  $F_2$, which are defined as follows.
\begin{equation}
F_1(p)(\cdot)=\frac{\int_\cdot^m p(r)\,\ud r}{\int_0^m p(r)\,\ud
r},\quad p\in\mathcal{X}_+\setminus\{ 0\},\quad F_2(p)=\int_0^m
p(r)\,\ud r,\quad p\in\mathcal{X}_+. \label{heq3}
\end{equation}
The results obtained for this model will therefore also illustrate that the general framework we have outlined in \cite{CF2} can be applied to models incorporating multiple (different) types of competition effects (inducing different types of nonlinearities), simultaneously.

To formulate the (positive) steady state problem for  model \eqref{heq1}-\eqref{heq2} in the form of \eqref{problemss}, we set the parameter space as $\mathcal{Y}:=L^1(0,m)=\mathcal{X}$, and we define for every ${\bf u}\in\mathcal{Y}_+^0:=\mathcal{Y}_+\setminus\{{\bf 0}\}$ the linear operators $\mathcal{B}_{\bf u}$ and $\Phi_{\bf u}$ as follows.
\begin{align}
\mathcal{B}_{\bf u}\,p= &-p'-\mu\left(\cdot,F_1({\bf u})(\cdot)\right)p, \label{heq4} \\
D(\mathcal{B}_{\bf u})= & \left\{p\in W^{1,1}(0,m)\,|\, p(0)=\Phi_{\bf u}(p)\right\}, \label{heq5} \\
\Phi_{\bf u}\,p=& \int_0^m\beta\left(a,F_2({\bf u})\right)p(a)\,\ud
a, \quad D(\Phi_{\bf u})=\mathcal{X}_+. \label{heq6}
\end{align}

In \eqref{heq5} above $W^{1,1}(0,m)$ denotes the usual Sobolev space of integrable functions with generalized first derivative in $L^1$. For every ${\bf u}\in\mathcal{Y}_+^0$ it can be shown that $\mathcal{B}_{\bf u}$
generates a positive and eventually compact semigroup of operators, under
some very mild smoothness assumptions on the model ingredients $\beta$ and $\mu$ (see e.g. \cite{FH} for more details). For example, we may, and we will, in the rest of the paper assume that
$\mu$ and $\beta$ are non-negative, continuous and bounded, and that $\mu\not\equiv 0,\beta\not\equiv 0$.

Moreover, the semigroup generated by $\mathcal{B}_{\bf u}$ can be shown irreducible if we impose the natural assumption that
\begin{equation}\label{irred}
\beta(m,\cdot)\ne 0.
\end{equation}

To establish the existence of a positive steady state of model \eqref{heq1}-\eqref{heq2}, using the general framework from  \cite{CF2}, we need to assure (amongst other things) the existence of a `desirable' set in the parameter space $\mathcal{Y}$, where the spectral bound of the operators $\mathcal{B}_{\bf u}$ vanish.
In particular, our goal is to show that this level set, defined specifically as
\begin{equation}\label{levelset}
{S}=\{{\bf u}\in\mathcal{Y}^0_+\,|\, s(\mathcal{B}_{\bf u})=0\},
\end{equation}
intersects every positive ray in $\mathcal{Y}$ in a unique point (element), and that it is contained in a ball $B\subset\mathcal{Y}$ of a sufficiently large radius. To this end, it is sufficient to show that the spectral bound is strictly monotone decreasing (or, alternatively, increasing) along positive rays, that is $\forall\,{\bf u}\in\mathcal{Y}^0_+,$ we have $s\left(\mathcal{B}_{\bf u}\right)>s\left(\mathcal{B}_{\alpha\,{\bf u}}\right),\, \forall\,\alpha>1$; and in particular that it changes sign along every ray in the set $B\cap\mathcal{Y}_+$. But, as we noted before, $\mu$ is constant along positive rays (since $F_1$ is constant along rays in $\mathcal{Y}$), hence to establish that the spectral bound is strictly monotone along rays in the positive cone we need to control the spectral bound via the boundary operator $\Phi_{\bf u}$, which only appears in the domain of the generator $\mathcal{B}_{\bf u}$.

It is the main purpose of this paper to show how this can be done using the framework of Sobolev towers, in particular using the extrapolated space of $\mathcal{X}$. To the best of our knowledge the notion of  extrapolated  spaces was originally introduced to establish existence of solutions of some nonlinear evolution equations, see e.g. \cite{PG}. Significantly, later they were also employed to deal with boundary perturbations of generators of semigroups, see e.g. \cite{D-Sch,Gre} for more details. We also mention that there is a well-developed and fruitful theory of integrated semigroups, in particular with applications to age-structured population models. We refer the interested reader for example to \cite{Magal2018,Magal2009} for more details and further references.

Recall from \cite[Ch.II]{NAG} that the extrapolated space $\mathcal{X}_{-1}$ corresponding to the generator $\mathcal{A}$ (assuming it is invertible) is
defined as the completion of $\mathcal{X}$ with respect to the following norm:
\begin{equation}
||x||_{-1}:=\left|\left|\mathcal{A}^{-1}\,x\right|\right|_{\mathcal{X}}.
\end{equation}
Then, $\mathcal{X}$ is dense in $\mathcal{X}_{-1}$ and the generator $\mathcal{A}$ of the strongly continuous semigroup $\mathcal{T}$ has a unique continuous extension $\mathcal{A}_{-1}$ with $D(\mathcal{A}_{-1})=\mathcal{X}$, which is the generator of the so-called extended semigroup $\mathcal{T}_{-1}$ on $\mathcal{X}_{-1}$. Recall also from \cite[Ch.II]{NAG} that in general $\mathcal{A}_|$ is called the part of $\mathcal{A}$ in $\mathcal{Z}\subseteq\mathcal{X}$ if
\begin{equation}
\mathcal{A}_|\,z:=\mathcal{A}\,z,\quad D(\mathcal{A}_|):=\left\{z\in D(\mathcal{A})\cap\mathcal{Z}\,|\,\mathcal{A}\,z\in\mathcal{Z}\right\}.
\end{equation}
Note that $\mathcal{A}$ is the part of $\mathcal{A}_{-1}$ in $\mathcal{X}$.

In our setting we consider the modified generator
\begin{equation}
\hat{\mathcal{B}}_{\bf u}\,p=-p'-\mu\left(\cdot,F_1({\bf u})(\cdot)\right)p, \quad D(\hat{\mathcal{B}}_{\bf u})=\left\{p\in W^{1,1}(0,m)\,\,|\,\, p(0)=0\right\}, \label{heq7}
\end{equation}
and introduce the operator
\begin{equation}
\hat{\mathcal{C}}_{\bf u}\,p=-\Phi_{\bf u}(p)\,\left(\hat{\mathcal{B}}_{\bf u}\right)_{-1}\left(\exp\left\{-\int_0^\cdot\mu(r,F_1({\bf u})(r))\,\ud r\right\}\right), \label{heq8}
\end{equation}
where $\left(\hat{\mathcal{B}}_{\bf u}\right)_{-1}$ is the unique
continuous extension of  $\hat{\mathcal{B}}_{\bf u}$ with domain
$D\left(\left(\hat{\mathcal{B}}_{\bf u}\right)_{-1}\right)=\mathcal{X}$.
Note that $\hat{\mathcal{C}}_{\bf u}$ is a bounded linear operator from
$\mathcal{X}$ to $\mathcal{X}_{-1}$, and in fact it is of rank $1$. For convenience we
also introduce the notation
\begin{equation} \label{pi}
\pi(a):=\pi\left(a,F_1({\bf
u})\right):=\exp\left\{-\int_0^a\mu(r,F_1({\bf u})(r))\,\ud
r\right\},\quad a\in [0,m].
\end{equation}

\begin{proposition} \label{proppart}
For every ${\bf u}\in\mathcal{Y}^0_+$, the operator $\mathcal{B}_{\bf u}$ is the part of the operator \\ $\left(\left(\hat{\mathcal{B}}_{\bf u}\right)_{-1}+\hat{\mathcal{C}}_{\bf u}\right)$ in $\mathcal{X}$.
\end{proposition}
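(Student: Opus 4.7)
The plan is to exploit the elementary identity
\[
(\hat{\mathcal{B}}_{\bf u})_{-1}\,p + \hat{\mathcal{C}}_{\bf u}\,p \;=\; (\hat{\mathcal{B}}_{\bf u})_{-1}\bigl(p - \Phi_{\bf u}(p)\,\pi\bigr),
\]
which holds for every $p\in\mathcal{X}$ by linearity of $(\hat{\mathcal{B}}_{\bf u})_{-1}$ together with the definition \eqref{heq8} of $\hat{\mathcal{C}}_{\bf u}$, which is tailored precisely to produce, up to sign, the image under $(\hat{\mathcal{B}}_{\bf u})_{-1}$ of the scalar multiple $\Phi_{\bf u}(p)\,\pi$. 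This reduces the proposition to the question of when the right-hand side lands back in $\mathcal{X}$.

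First I would invoke the fact recalled just before the statement that $\hat{\mathcal{B}}_{\bf u}$ is itself the part of $(\hat{\mathcal{B}}_{\bf u})_{-1}$ in $\mathcal{X}$. Applied to $p - \Phi_{\bf u}(p)\,\pi$, this says that $(\hat{\mathcal{B}}_{\bf u})_{-1}(p - \Phi_{\bf u}(p)\,\pi)\in\mathcal{X}$ if and only if $p - \Phi_{\bf u}(p)\,\pi \in D(\hat{\mathcal{B}}_{\bf u})$, i.e.\ this function lies in $W^{1,1}(0,m)$ and vanishes at $a=0$. Since $\mu$ is continuous and bounded, the function $\pi$ defined in \eqref{pi} is classically continuously differentiable on $[0,m]$ with $\pi(0)=1$, so the two conditions collapse to $p\in W^{1,1}(0,m)$ and $p(0)=\Phi_{\bf u}(p)\,\pi(0)=\Phi_{\bf u}(p)$, which is exactly the defining condition \eqref{heq5}. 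Hence the domain of the part of $(\hat{\mathcal{B}}_{\bf u})_{-1}+\hat{\mathcal{C}}_{\bf u}$ in $\mathcal{X}$ agrees with $D(\mathcal{B}_{\bf u})$.

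It then remains to verify that the two operators act identically on $D(\mathcal{B}_{\bf u})$. For $p\in D(\mathcal{B}_{\bf u})$ the function $q:=p-\Phi_{\bf u}(p)\,\pi$ lies in $D(\hat{\mathcal{B}}_{\bf u})$, so on the one hand $(\hat{\mathcal{B}}_{\bf u})_{-1} q = \hat{\mathcal{B}}_{\bf u} q$, and on the other hand a direct expansion gives
\[
\hat{\mathcal{B}}_{\bf u} q \;=\; \mathcal{B}_{\bf u}\,p \;+\; \Phi_{\bf u}(p)\bigl(\pi' + \mu(\cdot, F_1({\bf u}))\,\pi\bigr).
\]
By the very definition \eqref{pi} the bracketed term vanishes identically, so $(\hat{\mathcal{B}}_{\bf u})_{-1}\,p + \hat{\mathcal{C}}_{\bf u}\,p = \hat{\mathcal{B}}_{\bf u} q = \mathcal{B}_{\bf u}\,p$.

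There is no substantial obstacle here; the only real content is the observation that the rank-one correction $\hat{\mathcal{C}}_{\bf u}$ is constructed exactly so as to absorb the inhomogeneous boundary condition $p(0)=\Phi_{\bf u}(p)$ into the homogeneous one $q(0)=0$ by subtracting the explicit element $\Phi_{\bf u}(p)\,\pi$ of the formal kernel of $\hat{\mathcal{B}}_{\bf u}$. Everything else is a mechanical application of the defining property of the part of an operator in a subspace.
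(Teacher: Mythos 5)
Your proof is correct, and it takes a genuinely more streamlined route than the paper's. Both arguments rest on the same rank-one identity $\left(\left(\hat{\mathcal{B}}_{\bf u}\right)_{-1}+\hat{\mathcal{C}}_{\bf u}\right)p=\left(\hat{\mathcal{B}}_{\bf u}\right)_{-1}\left(p-\Phi_{\bf u}(p)\pi\right)$, and the paper uses it, exactly as you do, to compute the action on the common domain. The difference is in how the equality of domains is obtained: the paper proves the two inclusions separately by hand, approximating $y$ and $\pi$ by truncations vanishing at $a=0$, estimating the discrepancy in the $L^{1}$-norm for one inclusion, and applying $\left(\hat{\mathcal{B}}_{\bf u}\right)^{-1}$ and passing to the limit for the other. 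You instead observe that, once the identity is in place, the domain question is precisely the question of when $\left(\hat{\mathcal{B}}_{\bf u}\right)_{-1}q\in\mathcal{X}$ for $q=p-\Phi_{\bf u}(p)\pi$, and you settle it by citing the abstract fact (recalled in the paper from \cite[Ch.II]{NAG}) that $\hat{\mathcal{B}}_{\bf u}$ is the part of $\left(\hat{\mathcal{B}}_{\bf u}\right)_{-1}$ in $\mathcal{X}$; since $\pi$ is $C^{1}$ with $\pi(0)=1$, the condition $q\in D(\hat{\mathcal{B}}_{\bf u})$ indeed collapses to $p\in W^{1,1}(0,m)$ and $p(0)=\Phi_{\bf u}(p)$. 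In effect, the paper's approximation argument re-derives in this concrete situation the very fact you quote, so your version buys brevity and fewer opportunities for computational slips; what the paper's explicit construction buys is a concrete representation of $-\left(\hat{\mathcal{B}}_{\bf u}\right)_{-1}\pi$ as the $\mathcal{X}_{-1}$-limit of the nonnegative functions $n\,\pi\left(\frac{1}{n}\right)(1+\mu x)\chi_{[0,\frac{1}{n}]}$, which is reused later (in the proof of Theorem \ref{result1}, where positivity of $-\left(\hat{\mathcal{B}}_{\bf u}\right)_{-1}\pi$ in $\mathcal{X}_{-1}$ is needed). Two cosmetic points you may wish to make explicit: $\Phi_{\bf u}$, defined in \eqref{heq6} on $\mathcal{X}_{+}$, is to be understood as extended by linearity to all of $\mathcal{X}$, and the evaluation $p(0)$ makes sense only after one knows $p\in W^{1,1}(0,m)$ (continuous representative) — both of which the paper also uses implicitly.
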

\begin{proof} First we show that the domain of the part of
$\left(\left(\hat{\mathcal{B}}_{\bf u}\right)_{-1}+\hat{\mathcal{C}}_{\bf u}\right)$
in $\mathcal{X}$ coincides with the domain of $\mathcal{B}_{\bf u}$.
That is, we need to show that
\begin{equation}
D_1:=\left\{y\in\mathcal{X}\,\left\vert\,
\left(\left(\hat{\mathcal{B}}_{\bf u}\right)_{-1}+\hat{\mathcal{C}}_{\bf u}\right)\right.\,y\in\mathcal{X}\right\}=\left\{y\in
W^{1,1}(0,m)\,|\,y(0)=\Phi_{\bf u}(y)\right\}=:D_2.\label{heq9}
\end{equation}

First assume that $y\in D_2$. We define the following sequences of functions

\begin{align}
y_n(x):= &
\begin{Bmatrix}
y(x),\quad x> \frac{1}{n} \\
y\left(\frac{1}{n}\right)nx,\quad x\le\frac{1}{n}
\end{Bmatrix}, \quad n\in\mathbb{N},\quad x\in [0,m], \\
\pi_n(x):= &
\begin{Bmatrix}
\pi(x),\quad x>\frac{1}{n} \\
\pi\left(\frac{1}{n}\right)nx,\quad x\le\frac{1}{n}
\end{Bmatrix}, \quad n\in\mathbb{N},\quad x\in [0,m].
\end{align}

Note that $y_n(0)=\pi_n(0)=0,\,\forall\,n\in\mathbb{N}$, and $y_n\to y,\, \pi_n\to\pi$ in $\mathcal{X}$, moreover $\forall\, n\in\mathbb{N},\,\, y_n,\pi_n\in D(\hat{\mathcal{B}}_{\bf u})$.

Since
\begin{equation}
\left(\hat{\mathcal{B}}_{\bf u}\right)_{-1} y_n \to \left(\hat{\mathcal{B}}_{\bf u}\right)_{-1}y,\,\, \text{and}\,\,  -\Phi_{\bf u}(y_n)\left(\hat{\mathcal{B}}_{\bf u}\right)_{-1}\pi_n \to -\Phi_{\bf u}(y)\left(\hat{\mathcal{B}}_{\bf u}\right)_{-1}\,\pi=\hat{\mathcal{C}}_{\bf u}\,y,
\end{equation}
in  $\mathcal{X}_{-1}$; we have that
\begin{equation}
\left(\hat{\mathcal{B}}_{\bf u}\right)_{-1} y_n-\Phi_{\bf u}(y_n)\left(\hat{\mathcal{B}}_{\bf u}\right)_{-1}\pi_n \to\left(\left(\hat{\mathcal{B}}_{\bf u}\right)_{-1}+\hat{\mathcal{C}}_{\bf u}\right)\,y,\quad \text{in} \quad \mathcal{X}_{-1}.
\end{equation}
But we are going to show that in fact
\begin{equation}
\left(\hat{\mathcal{B}}_{\bf u}\right)_{-1}y_n-\Phi_{\bf u}(y_n)\left(\hat{\mathcal{B}}_{\bf u}\right)_{-1}\pi_n \to
\left(\left(\hat{\mathcal{B}}_{\bf u}\right)_{-1}+\hat{\mathcal{C}}_{\bf u}\right)\,y
= -y'-\mu y, \quad \text{in} \quad \mathcal{X}.
\end{equation}

To this end note that for $x>\frac{1}{n}$ we have (below for simplicity we are suppressing
the arguments in the functions $y$, $\mu$ and $\pi$)

\begin{equation}
\left(\hat{\mathcal{B}}_{\bf u}\right)_{-1} y_n-\Phi_{\bf u}(y_n)\left(\hat{\mathcal{B}}_{\bf u}\right)_{-1}\pi_n =-y'_n-\mu\,y_n+\Phi_{\bf u}(y_n)\left(\pi'_n+\mu\pi_n\right)
= -y'-\mu\,y,
\end{equation}
since $\pi'_n(x)+\mu\pi_n(x)=0,\,\,\forall\,x>\frac{1}{n}$.

While for $x\le\frac{1}{n}$ we have
\begin{align}
 \left(\hat{\mathcal{B}}_{\bf u}\right)_{-1}y_n  & - \Phi_{\bf u}(y_n)\left(\hat{\mathcal{B}}_{\bf u}\right)_{-1}\pi_n  \\
  = & -n\,y\left(\frac{1}{n}\right)-\mu\,n\,y\left(\frac{1}{n}\right)\,x+\Phi_{\bf u}(y_n)\left(n\,\pi\left(\frac{1}{n}\right)+\mu\,n\,\pi\left(\frac{1}{n}\right)\,x\right).
\end{align}

Therefore we obtain

\begin{align}
& \left|\left| \left(\hat{\mathcal{B}}_{\bf u}\right)_{-1}y_n-\Phi_{\bf u}(y_n)\left(\hat{\mathcal{B}}_{\bf u}\right)_{-1}\pi_n  -(-y'-\mu \,y) \right|\right|_{\mathcal{X}} \le \int_0^{\frac{1}{n}} |y' + \mu \, y| \,\ud x \nonumber \\
& + \int_0^{\frac{1}{n}}\left|\Phi_{\bf u}(y_n)\,n\,\pi\left(\frac{1}{n}\right)-n\,y\left(\frac{1}{n}\right)\right|\,\ud x
+\int_0^{\frac{1}{n}}\left|n\,x\left(\Phi_{\bf u}(y_n)\mu\,\pi\left(\frac{1}{n}\right)-\mu\,y\left(\frac{1}{n}\right)\right)\right|\,\ud x \nonumber \\
& \le \int_0^{\frac{1}{n}} |y' + \mu \, y| \,\ud x +
\left|\Phi_{\bf u}(y_n)\pi\left(\frac{1}{n}\right)-y\left(\frac{1}{n}\right)\right|+\frac{\sup(\mu)}{2n}\,\left|\Phi_{\bf u}(y_n)
  \pi\left(\frac{1}{n}\right)-y\left(\frac{1}{n}\right)\right| \nonumber \\ 
  & \xrightarrow[n\to\infty]{}0,
\end{align}
since $y$ and $y'$ belong to $L^1$, $\Phi_{\bf u}(y_n)\rightarrow
\Phi_{\bf u}(y) = y(0)$ ($y_n$ tends to $y$ in $\mathcal{X}$),
$y\left(\frac{1}{n}\right) \rightarrow y(0)$ (note that $y$ is a continuous function), and
$\pi\left(\frac{1}{n}\right) \rightarrow 1$, as $n\to\infty$. Hence we conclude that $y\in D_1$.

Next assume that $y\in D_1$ and take an arbitrary sequence of smooth
functions $y_n$, such that $y_n \in W^{1,1}(0,m)$ and $y_n(0)=0$ for
every $n\in\mathbb{N}$, and such that $y_n\to y$ in $\mathcal{X}.$
Then we have

\begin{equation}
\left(\left(\hat{\mathcal{B}}_{\bf u}\right)_{-1}+\hat{\mathcal{C}}_{\bf u}\right)\,y_n\xrightarrow[n\to\infty]{}\left(\left(\hat{\mathcal{B}}_{\bf u}\right)_{-1}+\hat{\mathcal{C}}_{\bf u}\right)\,y=z\in\mathcal{X},\label{heq10}
\end{equation}
with convergence in $\mathcal{X}_{-1}$. Using the definition of the $\mathcal{X}_{-1}$ norm it is shown that \eqref{heq10} is equivalent to

\begin{align}
\left(\hat{\mathcal{B}}_{\bf u}\right)^{-1}\ \left(
\left(\hat{\mathcal{B}}_{\bf u}\right)_{-1}+\hat{\mathcal{C}}_{\bf u}\right) y_n
 & = y_n + \left(\hat{\mathcal{B}}_{\bf u}\right)_{-1}^{-1} \hat{\mathcal{C}}_{\bf u}
\,y_n =
y_n - \left(\hat{\mathcal{B}}_{\bf u}\right)^{-1}_{-1}\Phi_{\bf u}(y_n)\left(\hat{\mathcal{B}}_{\bf u}\right)_{-1}\pi \nonumber \\
& = y_n-\Phi_{\bf u}(y_n)\pi\to \left(\hat{\mathcal{B}}_{\bf u}\right)^{-1}\,z\ =: w \in
 D(\hat{\mathcal{B}}_{\bf u}), \quad \text{in}\,\,\mathcal{X};
\end{align}
i.e. $w \in W^{1,1}(0,m)$, and $w(0)=0.$

Since $y_n\xrightarrow[\mathcal{X}]{} y$ and $\Phi_{\bf u}(y_n) \to \Phi_{\bf u}(y),$ we
have $w=y-\Phi_{\bf u}(y) \pi$, which implies that $y \in W^{1,1}(0,m)$, and
$y(0)=w(0)+\Phi_{\bf u}(y)\pi(0) = \Phi_{\bf u}(y)$; that is $y \in D_2.$

Now, for $y\in D_1,$ we have
\begin{align}
\left(\left(\hat{\mathcal{B}}_{\bf u}\right)_{-1}+\hat{\mathcal{C}}_{\bf u}\right)\,y
& = \left(\hat{\mathcal{B}}_{\bf u}\right)_{-1}\,y-\Phi_{\bf u}(y)\left(\hat{\mathcal{B}}_{\bf u}\right)_{-1}\pi   \\
&  =\left(\hat{\mathcal{B}}_{\bf u}\right)_{-1}\left(y - \Phi_{\bf u}(y)\pi
\right) =    \left(\hat{\mathcal{B}}_{\bf u}\right)_{-1}\left(y - y(0)\pi
\right)  \\
& = -(y'-y(0)\pi')-\mu(y-y(0)\pi) = -y' -\mu y = \mathcal{B}_{\bf u}\,y,
\end{align}
where we used that $(y - y(0)\pi) \in D\left(\hat{\mathcal{B}}_{\bf u}\right)$, and that $y \in
D_2 = D_1.$ Hence the proof is completed. 

\end{proof}

From the point of view of applying our steady state framework to model \eqref{heq1}-\eqref{heq2}, the significance of recovering the generator $\mathcal{B}_{\bf u}$ as part of
$\left(\left(\hat{\mathcal{B}}_{\bf u}\right)_{-1}+\hat{\mathcal{C}}_{\bf u}\right)$
is that, since the semigroups $\mathcal{T}(t)$ and
$\mathcal{T}_{-1}(t)$ are similar, that is
\begin{equation}
\mathcal{T}(t)=\left(\left(\hat{\mathcal{B}}_{\bf u}\right)_{-1}+\hat{\mathcal{C}}_{\bf u}\right)^{-1}\,\mathcal{T}_{-1}(t)\,\left(\left(\hat{\mathcal{B}}_{\bf u}\right)_{-1}+\hat{\mathcal{C}}_{\bf u}\right),\quad t\ge 0,
\end{equation}
holds; their spectra coincide, see \cite[Ch.II]{NAG} for more details. 

This in particular means that we can study the behaviour of $s\left(\left(\hat{\mathcal{B}}_{\bf u}\right)_{-1}+\hat{\mathcal{C}}_{\bf u}\right)$ as a function of ${\bf u}$, instead of studying directly the behaviour of  $s(\mathcal{B}_{\bf u})$ as a function of the parameter ${\bf u}$, and vice versa.

Note that if we define the positive cone of $\mathcal{X}_{-1}$ as the completion of $\mathcal{X}_+$ with respect to the norm $||\cdot||_{-1}$, then it is shown that $\mathcal{X}_{-1}$, with the usual partial ordering, is a Banach lattice itself.

We now formulate conditions which guarantee the existence of a (strictly) positive steady state of the nonlinear model \eqref{heq1}-\eqref{heq2}.

\begin{theorem}\label{result1}
Assume that $\beta$ is a strictly monotone decreasing function with respect to its second variable,  and that
\begin{enumerate}
\item $\mu>\mu_0$, for some $\mu_0>0$.
\item  $\exists\, K>0\quad \text{such that}\quad \displaystyle\max_{a\in[0,m]}\,\{\beta(a,K)\}<\mu_0$.
\item $\beta(m,x)>0,\,\,\forall\,x\in\mathbb{R}_+$.
\end{enumerate}
Then, if there exists an $r>0$, such that $s(\mathcal{B}_{\bf u})>0$ holds for all ${\bf u}\in\mathcal{Y}^0_+$ ,  $||{\bf u}||\le r$, then model \eqref{heq1}-\eqref{heq2} admits a strictly positive steady state.
\end{theorem}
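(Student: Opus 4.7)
The strategy is to verify the hypotheses of the abstract fixed point framework of \cite{CF2} in the present setting, where $\mathcal{Y}=\mathcal{X}=L^1(0,m)$ and the environmental operator is the identity. Beyond what the hypotheses of the theorem already supply, the two key facts that remain to be checked are: (i) the level set $S=\{{\bf u}\in\mathcal{Y}_+^0\,|\,s(\mathcal{B}_{\bf u})=0\}$ meets every positive ray in $\mathcal{Y}$ at exactly one point, and (ii) $S$ is bounded in $\mathcal{Y}$.

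The core step is (i), which reduces to the strict monotonicity of $\alpha\mapsto s(\mathcal{B}_{\alpha{\bf u}})$ for $\alpha>0$. I would use Proposition \ref{proppart} together with the similarity between $\mathcal{T}$ and $\mathcal{T}_{-1}$ to replace $\mathcal{B}_{\bf u}$ by $(\hat{\mathcal{B}}_{\bf u})_{-1}+\hat{\mathcal{C}}_{\bf u}$, whose spectrum coincides with that of $\mathcal{B}_{\bf u}$. Along a positive ray the ratio $F_1({\bf u})$ is invariant under rescaling, hence both $\hat{\mathcal{B}}_{\bf u}$ and its extrapolation, as well as the survival function $\pi$ in \eqref{pi}, are independent of $\alpha$. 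The only $\alpha$-dependent ingredient is the boundary functional $\Phi_{\alpha{\bf u}}$, and the strict monotone decrease of $\beta$ in its second argument combined with $F_2(\alpha{\bf u})=\alpha F_2({\bf u})$ yields $\Phi_{\alpha{\bf u}}<\Phi_{\bf u}$ pointwise on $\mathcal{X}_+\setminus\{0\}$ whenever $\alpha>1$. Via the rank-one formula \eqref{heq8} this produces a strict ordering of the bounded perturbations $\hat{\mathcal{C}}_{\alpha{\bf u}}$ in the positive cone of operators from $\mathcal{X}$ into $\mathcal{X}_{-1}$ (recall that, formally, $(\hat{\mathcal{B}}_{\bf u})_{-1}\pi=-\delta_0$, so $\hat{\mathcal{C}}_{\bf u}p=\Phi_{\bf u}(p)\delta_0$ is a positive rank-one operator). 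Irreducibility, secured by assumption (3), and eventual compactness of $\mathcal{T}_{\bf u}$ then allow the classical comparison theorem for the spectral bound of irreducible positive generators (see e.g.\,\cite{NAG,Sch}) to yield $s(\mathcal{B}_{\alpha{\bf u}})<s(\mathcal{B}_{\bf u})$ for every $\alpha>1$.

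For (ii), assumptions (1)--(2) give $s(\mathcal{B}_{\bf u})<0$ whenever $\|{\bf u}\|_{L^1}=F_2({\bf u})\ge K$: the Lotka-type equation
\begin{equation*}
\int_0^m\beta(a,F_2({\bf u}))\,\pi(a,F_1({\bf u}))\,\ud a=1
\end{equation*}
characterising $s(\mathcal{B}_{\bf u})=0$ fails because
\begin{equation*}
\int_0^m\beta(a,F_2({\bf u}))\,\pi(a,F_1({\bf u}))\,\ud a\le\int_0^m\beta(a,K)\,e^{-\mu_0 a}\,\ud a<\mu_0\int_0^m e^{-\mu_0 a}\,\ud a=1-e^{-\mu_0 m}<1.
\end{equation*}
Combined with the hypothesis $s(\mathcal{B}_{\bf u})>0$ on $\|{\bf u}\|\le r$, the continuity of ${\bf u}\mapsto s(\mathcal{B}_{\bf u})$, and the strict monotonicity along rays, the intermediate value theorem yields (i) with $S$ contained in the ball of radius $K$.

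With (i) and (ii) in hand, the abstract scheme of \cite{CF2} then applies: parametrise $S$ by the positive unit sphere through ${\bf v}\mapsto\alpha^*({\bf v}){\bf v}\in S$; for each ${\bf w}\in S$ select the (up to scaling unique) strictly positive eigenvector $u_{\bf w}$ of $\mathcal{B}_{\bf w}$ at the eigenvalue $0$, whose existence and strict positivity follow from irreducibility and eventual compactness; and apply the pertinent fixed point theorem to the self-map ${\bf v}\mapsto u_{\alpha^*({\bf v}){\bf v}}/\|u_{\alpha^*({\bf v}){\bf v}}\|$. A fixed point ${\bf v}_0$ rescaled by $\alpha^*({\bf v}_0)$ yields a strictly positive steady state of \eqref{heq1}-\eqref{heq2}. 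I expect the most delicate step to be the strict monotonicity of the spectral bound along rays, where the comparison theorem has to be invoked in the extrapolated space $\mathcal{X}_{-1}$, with its positive cone defined as the completion of $\mathcal{X}_+$, and then transported back to $\mathcal{X}$ via the spectral equality supplied by the similarity of $\mathcal{T}$ and $\mathcal{T}_{-1}$.
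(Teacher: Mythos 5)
Your plan follows essentially the same route as the paper's proof: recast $\mathcal{B}_{\bf u}$ via Proposition \ref{proppart} as the part of $(\hat{\mathcal{B}}_{\bf u})_{-1}+\hat{\mathcal{C}}_{\bf u}$, obtain strict monotonicity of $s(\mathcal{B}_{\alpha{\bf u}})$ along rays from the strict positive ordering of the rank-one boundary perturbations (the paper makes this precise with a resolvent identity and Arendt--Batty's comparison result, which is the "comparison theorem" you invoke), bound the level set $S$ by the same Lotka-integral estimate using hypotheses (1)--(2), and close with a normalised-eigenvector fixed point map on the positive unit sphere (the paper supplies the remaining details you leave implicit: continuity of ${\bf u}\mapsto s(\mathcal{B}_{\bf u})$ via the implicit function theorem applied to the characteristic equation, continuity of ${\bf u}\mapsto\mathcal{B}_{\bf u}$ in the generalised-convergence sense, and compactness of the map through the embedding $W^{1,1}\hookrightarrow L^1$ so that Schauder applies). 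The proposal is correct in outline and matches the paper's argument.
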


\begin{proof}  Similarly as we have applied our framework previously e.g. in \cite{CF} for a model with infinite dimensional nonlinearity, but with distributed recruitment process; our goal is to transform the steady
state problem into a fixed point problem in the parameter space.
This requires us to establish some desirable properties of the
level set $S$ (defined in \eqref{levelset}). In particular, we are
going to establish the following properties.
\begin{itemize}
\item[(i)]  $s\left(\mathcal{B}_{\bf u}\right)$ is a continuous function of the parameter ${\bf u}$.
\item[(ii)] $s\left(\mathcal{B}_{\bf u}\right)$ is strictly monotone decreasing (as a function of ${\bf u}$) along rays in $\mathcal{Y}_+^0$.
\item[(iii)] The zero level set $S\subset\mathcal{Y}_+^0$ defined in \eqref{levelset} is contained in a bounded ball.
\end{itemize}

To establish the continuity of $s\left(\mathcal{B}_{\bf u}\right)$ as a function of the parameter ${\bf u}$, previously (see  \cite{CF2}) we used the notion of generalised convergence of operators and perturbation results from \cite{K}. Here, in contrast, we employ a more direct approach, noting that  $\mathcal{B}_{\bf u}$ generates an eventually compact positive semigroup, therefore its spectral bound $s\left(\mathcal{B}_{\bf u}\right)$ is an isolated eigenvalue, for every ${\bf u}\in \mathcal{Y}_+^0$. In fact $s\left(\mathcal{B}_{\bf u}\right)$ is the dominant real solution of the characteristic equation
\begin{equation}
1=\int_0^m\beta(a,F_2({\bf
u}))\exp\left\{-\int_0^a\left(\lambda+\mu(r,F_1({\bf
u})(r))\right)\,\ud r\right\}\,\ud a=:K({\bf u},\lambda).
\end{equation}
We now set 
\begin{equation}\label{G-def}
K({\bf u},\lambda)-1=:G({\bf u},\lambda)\,:\, \left\{\mathcal{Y}\setminus\left\{{\bf u} \in \mathcal{Y}:\int_0^m {\bf u}(r) \,\ud r = 0 \right\}\right\}\times\mathbb{R}\to\mathbb{R},
\end{equation} 
and apply the Implicit Function Theorem. In particular we use Theorem A from Appendix A in
\cite{CR}. Note that if we naturally extend the functions $\beta$
and $\mu$ as $\beta(\cdot,x)=\beta(\cdot,0),\,\forall x<0$, and
$\mu(\cdot, x):=\mu(\cdot, 0),\,\forall\, x < 0$, then $G$ as defined above in \eqref{G-def} (on an open set) is a continuous map.

Also note that for any
$\left({\bf u}_*,s(\mathcal{B}_{{\bf u}_*})\right)\in
\left\{\mathcal{Y}\setminus\left\{{\bf u} \in \mathcal{Y}:\int_0^m {\bf u}(r)
\,\ud r = 0 \right\}\right\}\times\mathbb{R}$ we have 
\begin{equation*}
G\left({\bf u}_*,s(\mathcal{B}_{{\bf u}_*})\right)=0,
\end{equation*} 
and the map $\lambda\to G({\bf u},\lambda)$ is continuously differentiable for every parameter value ${\bf
u}\in \mathcal{Y}\setminus\left\{{\bf u} \in \mathcal{Y}:\int_0^m {\bf u}(r)
\,\ud r = 0 \right\}$. 

In fact, since
\begin{equation}
\frac{\partial G}{\partial \lambda}({\bf u},\lambda)=-\int_0^m a\beta(a,F_2({\bf u}))\exp\left\{-\int_0^a\left(\lambda+\mu(r,F_1({\bf u})(r))\right)\,\ud r\right\}\,\ud a\neq 0,
\end{equation}
we have that the map $x\to \frac{\partial G}{\partial
\lambda}\left({\bf u}_*,s(\mathcal{B}_{{\bf u}_*})\right)x$ is a
linear homeomorphism from $\mathbb{R}$ onto $\mathbb{R}$.

Then, the Implicit Function Theorem implies that
$s(\mathcal{B}_{{\bf u}_*})$ is a continuous function of ${\bf u}_*$
for all ${\bf u}_*\in \mathcal{Y}\setminus\left\{{\bf u} \in
\mathcal{Y}:\int_0^m {\bf u}(r) \,\ud r = 0 \right\}$.

Next we show that the function ${\bf u}\to s(\mathcal{B}_{\bf u})$ is strictly monotone decreasing along positive rays in $\mathcal{Y}$.  Using the extended generator, we have for every ${\bf u}\in\mathcal{Y}_+^0$, $v\in\mathcal{X}_+$,
 and $0<\alpha_1<\alpha_2$ (note that ${\bf u}$ is a parameter,
while $v$ is an element of the positive cone of the state space
$\mathcal{X}$)
\begin{equation}\label{opdiff}
\left(\left(\left(\hat{\mathcal{B}}_{\alpha_1 {\bf
u}}\right)_{-1}+\hat{\mathcal{C}}_{\alpha_1 {\bf u}}\right)-
\left(\left(\hat{\mathcal{B}}_{\alpha_2{\bf
u}}\right)_{-1}+\hat{\mathcal{C}}_{\alpha_2{\bf
u}}\right)\right)\,v=\left(\hat{\mathcal{C}}_{\alpha_1{\bf u}}-
\hat{\mathcal{C}}_{\alpha_2{\bf u}}\right)\,v\ge 0,
\end{equation}
a positive perturbation. 

Indeed, using that $F_1$ is invariant along rays and
the notation \eqref{pi}, we have
\begin{align*}
\left(\hat{\mathcal{C}}_{\alpha_1{\bf u}}-
\hat{\mathcal{C}}_{\alpha_2{\bf u}}\right)\,v & = \left(
\Phi_{\alpha_1{\bf u}}(v) - \Phi_{\alpha_2{\bf u}}(v) \right)
\left(-\left(\hat{\mathcal{B}}_{{\bf u}}\right)_{-1} \pi\right) \\
 & = \int_0^m \left(\beta(a,F_2(\alpha_1 {\bf u}))-
\beta(a,F_2(\alpha_2 {\bf u}))\right) v(a) da
\left(-\left(\hat{\mathcal{B}}_{{\bf u}}\right)_{-1} \pi\right) \geq
0.
\end{align*}

This is because $F_2(\alpha_1 {\bf u})-F_2(\alpha_2 {\bf u}) =
(\alpha_1-\alpha_2) \int_0^m {\bf u}(a) da < 0$, and hence the integral above is
positive because $\beta$ decreases with respect to its second
variable and, moreover, we have $\left(-\left(\hat{\mathcal{B}}_{{\bf
u}}\right)_{-1}\pi\right) \in \left(\mathcal{X}_{-1}\right)_{+}$, because
\begin{equation*}
\left(-\left(\hat{\mathcal{B}}_{{\bf u}}\right)_{-1} \right)\pi =\lim_{n\to\infty} \left(-\hat{\mathcal{B}}_{{\bf u}}\right)
\pi_n =  \lim_{n\to\infty} \left(n\pi\left(\frac{1}{n}\right)(1+\mu\,x)\chi_{[0,\frac{1}{n}]}(x)\right),
\end{equation*} 
where the limits above are in $\mathcal{X}_{-1}$, (see also the proof of Prop. \ref{proppart}).

Note that
$\left(\hat{\mathcal{B}}_{\alpha_1{\bf
u}}\right)_{-1}-\left(\hat{\mathcal{B}}_{\alpha_2{\b
u}}\right)_{-1}=0$ follows from the fact that the parts of
$\left(\hat{\mathcal{B}}_{\alpha_1{\bf u}}\right)_{-1}$ and
$\left(\hat{\mathcal{B}}_{\alpha_2{\bf u}}\right)_{-1}$ in
$\mathcal{X}$ satisfy $\hat{\mathcal{B}}_{\alpha_1{\bf
u}}=\hat{\mathcal{B}}_{\alpha_2{\bf u}}$, and the only unique
continuous extension of the zero operator from a dense subset to
$\mathcal{X}$ is the zero operator.

As we have noted before, $\mathcal{B}_{\bf u}$ generates a positive
and eventually compact semigroup of operators, which is also
irreducible under our hypotheses, and therefore the spectral bound
$s(\mathcal{B}_{\bf u})$ belongs to its spectrum and it is an
eigenvalue of algebraic multiplicity one; and as a consequence
the same holds for $s\left(\left(\hat{\mathcal{B}}_{\bf
u}\right)_{-1}+\hat{\mathcal{C}}_{\bf u}\right)$.

Also note that for every sufficiently large $\lambda$, we have the identity
\begin{align}
 & R\left(\lambda,\left(\left(\hat{\mathcal{B}}_{\alpha_1{\bf u}}\right)_{-1}+\hat{\mathcal{C}}_{\alpha_1{\bf u}}\right)\right)-R\left(\lambda,\left(\left(\hat{\mathcal{B}}_{\alpha_2{\bf u}}\right)_{-1}+\hat{\mathcal{C}}_{\alpha_2{\bf u}}\right)\right) \nonumber \\
  = &  R\left(\lambda,\left(\left(\hat{\mathcal{B}}_{\alpha_1{\bf u}}\right)_{-1}+\hat{\mathcal{C}}_{\alpha_1{\bf u}}\right)\right)
 \left(\hat{\mathcal{C}}_{\alpha_1{\bf u}}-\hat{\mathcal{C}}_{\alpha_2{\bf u}}\right)R\left(\lambda,\left(\left(\hat{\mathcal{B}}_{\alpha_2{\bf u}}\right)_{-1}+\hat{\mathcal{C}}_{\alpha_2{\bf u}}\right)\right),
\end{align}
and therefore Proposition A.2 in \cite{AB} (see also Theorem 1.3 in \cite{AB2}) implies that
\begin{equation}
s\left(\left(\hat{\mathcal{B}}_{\alpha_1{\bf u}}\right)_{-1}+\hat{\mathcal{C}}_{\alpha_1{\bf u}}\right)>
s\left(\left(\hat{\mathcal{B}}_{\alpha_2{\bf u}}\right)_{-1}+\hat{\mathcal{C}}_{\alpha_2{\bf u}}\right),
\end{equation}
which indeed shows that the function ${\bf u}\to s(\mathcal{B}_{\bf u})$ is strictly monotone decreasing along positive rays in $\mathcal{Y}$.

Finally we are going to prove  that the zero level set $S$ is contained in a bounded ball. To this end we let
\begin{equation*}
\lambda_{\bf u}:=s\left(\left(\hat{\mathcal{B}}_{\bf u}\right)_{-1}+\hat{\mathcal{C}}_{\bf u}\right)=s(\mathcal{B}_{\bf u}),\quad {\bf u}\in\mathcal{Y}_+^0.
\end{equation*}
We are going to show that for all ${\bf u}\in\mathcal{Y}_+^0$ such that $||{\bf u}||>K$ holds, we have $\lambda_{\bf u}<0$, which implies the assertion that $S$ is contained in a bounded ball.

Note that for every ${\bf u}\in\mathcal{Y}_+^0$, $\lambda_{\bf u}$ is a dominant real eigenvalue of multiplicity one with a corresponding strictly positive eigenvector $p_{\bf u}$, satisfying the equation
\begin{equation}\label{spectrb1}
\left(\left(\hat{\mathcal{B}}_{\bf u}\right)_{-1}+\hat{\mathcal{C}}_{\bf u}\right)p_{\bf u}=\lambda_{\bf u}\,p_{\bf u},
\end{equation}
which is equivalent to
\begin{equation}\label{spectrb2}
\left(\left(\hat{\mathcal{B}}_{\bf u}\right)_{-1}-\lambda_{\bf u}\,\mathcal{I}\right)p_{\bf u}=\Phi_{\bf u}(p_{\bf u})\,\left(\hat{\mathcal{B}}_{\bf u}\right)_{-1}\left(\exp\left\{-\int_0^\cdot\mu(r,F_1({\bf u})(r))\,\ud r\right\}\right).
\end{equation}

Note that the definition of $\hat{\mathcal{B}}_{\bf u}$ (in particular the homogeneous boundary condition) implies that
\begin{equation}\label{spectrb3}
s\left(\left(\hat{\mathcal{B}}_{\bf u}\right)_{-1}\right)= - \infty,
\end{equation}
which means that for any $\lambda$ the operator $\left(\left(\hat{\mathcal{B}}_{\bf u}\right)_{-1}-\lambda\,\mathcal{I}\right)$ is invertible. Hence from \eqref{spectrb2} we obtain
\begin{equation}\label{spectrb4}
\Phi_{\bf u}(p_{\bf u})=\Phi_{\bf u}(p_{\bf u})\,\Phi_{\bf u}\left( \left(\left(\hat{\mathcal{B}}_{\bf u}\right)_{-1}-\lambda_{\bf u}\,\mathcal{I}\right)^{-1} \left[\left(\hat{\mathcal{B}}_{\bf u}\right)_{-1}\left(\exp\left\{-\int_0^\cdot\mu(r,F_1({\bf u})(r))\,\ud r\right\}\right) \right] \right),
\end{equation}
which yields (since $p_{\bf u}$ is strictly positive)
\begin{equation}\label{spectrb5}
1=\Phi_{\bf u}\left( \left(\left(\hat{\mathcal{B}}_{\bf u}\right)_{-1}-\lambda_{\bf u}\,\mathcal{I}\right)^{-1} \left[\left(\hat{\mathcal{B}}_{\bf u}\right)_{-1}\left(\exp\left\{-\int_0^\cdot\mu(r,F_1({\bf u})(r))\,\ud r\right\}\right) \right] \right)=:F(\lambda_{\bf u}).
\end{equation}

Note that $F$ is monotone decreasing for $\lambda_{\bf u}>0$, and at $\lambda_{\bf u}=0$ we have
\begin{align}
F(0)= & \Phi_{\bf u}\left(\exp\left\{-\int_0^\cdot\mu(r,F_1({\bf u})(r))\,\ud r\right\}\right) \\ \nonumber
= &\int_0^m\beta(a,F_2({\bf u}))\exp\left\{-\int_0^a\mu(r,F_1({\bf u})(r))\,\ud r\right\}\,\ud a \\ \nonumber
< & \int_0^m \mu_0 e^{-\mu_0 a}\,\ud a<1, \label{spectrb6}
\end{align}
for every ${\bf u}\in\mathcal{Y}_+^0$ satisfying $||{\bf u}||>K$, which shows that the spectral bound indeed changes sign in a bounded ball along every ray in $\mathcal{Y}^0_+$, and hence $S$ itself is contained in a bounded ball.

We also note that the fact that $S$ is bounded away from the origin follows directly from our last assumption in Theorem \ref{result1}.

To complete the proof let us denote by $B_+$ the unit sphere of $\mathcal{Y}$ intersected with the positive cone $\mathcal{Y}_+$, and define a map $\Theta$ from the closed and convex set $B_+$ into itself as:
\begin{equation}
\Theta\,:\,\underbrace{{\bf n}}_{\in B_+}\xrightarrow{h^{-1}}\underbrace{{\bf u}}_{\in S}\quad\xrightarrow{g}\underbrace{\mathcal{B}_{\bf u}}_{\in g(S)\subset C(\mathcal{X})}
\xrightarrow{e}\underbrace{{\bf V}_{\bf u}}_{\in W^{1,1}_+(0,m)}
\xrightarrow{k}\underbrace{{\bf V}_{\bf u}}_{\in L^1_+(0,m)}
\xrightarrow{p}\underbrace{{\bf u'}}_{\in S}\xrightarrow{h}\underbrace{{\bf n'}}_{\in B_+}.
\end{equation}
Above, $h$, defined as $h({\bf u})=\frac{{\bf u}}{||{\bf u}||}$, is continuous with a continuous inverse (this is because $S$ is bounded away from the origin and it is contained in a bounded ball). The projection $p$ along positive rays in $L_+^1$ is also continuous and bounded. $k$ is the compact injection  of $W^{1,1}(0,m)$ into $L^1(0,m)$. The map $e$, which assigns the strictly positive normalised eigenvector ${\bf V}_{\bf u}$ of $\mathcal{B}_{\bf u}$ to $\mathcal{B}_{\bf u}$, is analytic, (see e.g. \cite[Lemma 1.3]{CR1973}). Finally, the map $g$ above, which assigns the linear operator $(\mathcal{B}_{\bf u},D(\mathcal{B}_{\bf u}))\in C(\mathcal{X})$ (the set of closed operators on $\mathcal{X}$) to the parameter value ${\bf u}\in S$, is shown to be continuous using the notion of generalised convergence, see \cite[Sect.2 Ch. IV]{K}. (Note that compared to e.g. \cite{CF,CF3} the situation is more delicate, as both the operator and its domain depend on the parameter value ${\bf u}$.) In particular, continuity is established via \cite[Th.2.25 Ch. IV.]{K}, which characterizes generalised convergence of operators via convergence of their resolvents.  Since the resolvent of $\mathcal{B}_{\bf u}$ is explicitly given as ($f\in L^1(0,m)$)
\begin{align}
& \mathcal{R}(\mathcal{B}_{\bf u},\lambda) \,f  = \left[\frac{\int_0^m\beta(a,F_2({\bf u}))\int_0^a f(x)\exp\left\{-\int_x^a(\lambda+\mu(r,F_1({\bf u})(r)))\,\ud r\right\}\,\ud x\,\ud a}{1-\int_0^m\beta(a,F_2({\bf u}))\exp\left\{-\int_0^a(\lambda+\mu(r,F_1({\bf u})(r)))\,\ud r\right\}\,\ud a } \right. \nonumber \\
+ & \left.  \int_0^a f(x)\exp\left\{\int_0^x(\lambda+\mu(r,F_1({\bf u})(r)))\,\ud r\right\}\,\ud x\right]\exp\left\{-\int_0^a\left(\lambda+\mu(r,F_1({\bf u})(r))\right)\,\ud r\right\},
\end{align} it is shown, by applying Lebesgue's dominated convergence theorem, that for any sequence $\left\{{\bf u_n}\right\}\subset S$ tending to ${\bf \hat{u}}\in S$ there exists a $\lambda$ large enough, such that
\begin{equation}
\left|\left|\mathcal{R}(\mathcal{B}_{\bf u_n},\lambda)-\mathcal{R}(\mathcal{B}_{\bf \hat{u}},\lambda)\right|\right|\to 0,
\end{equation} 
as $n\to \infty$. \\ By Schauder's fixed point theorem the map $\Theta$ has a fixed point. Equivalently, the map $h^{-1}\circ\Theta\circ h$, which maps $S$ into itself, has a fixed point ${\bf u}_*$ which turns out to be a
steady state of the problem:
\begin{align*}
\mathcal{B}_{\bf u^*}\,{\bf u^*} & = \mathcal{B}_{\bf u^*}\left(\left(h^{-1}\circ\Theta\circ h\right){\bf u}_*\right)=\mathcal{B}_{{\bf u}_*}\left(\left(p\circ k\circ e\circ g\right) {\bf u}_*\right) \\ 
& =\mathcal{B}_{{\bf u}_*}\left(\alpha\left({\bf V}_{{\bf u}_*}\right)\left( k\circ e\circ g\right) {\bf u}_*\right)= \alpha\left({\bf V}_{{\bf u}_*}\right) \mathcal{B}_{{\bf u}_*}\left(\left( k\circ e\circ g\right) {\bf u}_*\right) \\
& = \alpha\left({\bf V}_{{\bf u}_*}\right) \mathcal{B}_{{\bf u}_*}\left({\bf V}_{{\bf u}_*}\right)= 0,
\end{align*}
for some $0\ne \alpha({\bf V}_{{\bf u}_*})\in\mathbb{R}$; since ${\bf V}_{{\bf u}_*}$ belongs to the kernel of $\mathcal{B}_{{\bf u}_*}$.
\end{proof}

The following (counterpart) result can be established along similar  lines.
\begin{theorem}\label{result2}
Assume that $\beta$ is a strictly monotone increasing function with respect to its second variable,  and that
\begin{enumerate}
\item $\beta(m,x)>0,\,\,\forall\,x\in\mathbb{R}_+$.
\item There exists an $r>0$, such that $s(\mathcal{B}_{\bf u})<0$ holds for all ${\bf u}\in\mathcal{Y}_+^0$ ,  $||{\bf u}||\le r$.
\item There exists an $R>r>0$, such that $s(\mathcal{B}_{\bf u})>0$ holds for all ${\bf u}\in\mathcal{Y}_+^0$ ,  $||{\bf u}||\ge R$.
\end{enumerate}
Then, model \eqref{heq1}-\eqref{heq2} admits a strictly positive steady state.
\end{theorem}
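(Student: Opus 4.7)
The plan is to mirror the three-step strategy used in the proof of Theorem \ref{result1}, establishing (i) continuity of ${\bf u} \mapsto s(\mathcal{B}_{\bf u})$ on $\mathcal{Y}_+^0$, (ii) strict monotonicity of this map along positive rays (now monotone \emph{increasing}), and (iii) that the zero level set $S$ defined in \eqref{levelset} is bounded and bounded away from the origin. Once these three properties are in place, the Schauder-type argument at the end of the proof of Theorem \ref{result1}, which constructs the composition $\Theta = h \circ p \circ k \circ e \circ g \circ h^{-1}$ on $B_+$ and applies Schauder's fixed point theorem, transfers essentially verbatim.

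Step (i) is insensitive to the direction of monotonicity of $\beta$: the Implicit Function Theorem applied to the characteristic equation $K({\bf u}, \lambda) = 1$ only uses smoothness and boundedness of $\beta$, $\mu$, together with the strict negativity of $\partial_\lambda G$, so it carries over unchanged. Step (ii) is where the key sign change appears. Exploiting that $F_1$ is invariant along positive rays, for $0 < \alpha_1 < \alpha_2$ and $v \in \mathcal{X}_+$ the same computation as in Theorem \ref{result1} now yields
\begin{equation*}
\left(\hat{\mathcal{C}}_{\alpha_2 {\bf u}} - \hat{\mathcal{C}}_{\alpha_1 {\bf u}}\right) v = \left(\int_0^m \bigl[\beta(a, F_2(\alpha_2 {\bf u})) - \beta(a, F_2(\alpha_1 {\bf u}))\bigr] v(a)\, \ud a\right) \cdot \left(-(\hat{\mathcal{B}}_{\bf u})_{-1} \pi\right) \ge 0,
\end{equation*}
since $\beta$ is now strictly monotone increasing in its second variable and $F_2(\alpha_2 {\bf u}) > F_2(\alpha_1 {\bf u})$. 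Proposition A.2 in \cite{AB} (still applicable, as irreducibility is secured by hypothesis (1), which is precisely \eqref{irred}) then implies $s\bigl((\hat{\mathcal{B}}_{\alpha_2 {\bf u}})_{-1} + \hat{\mathcal{C}}_{\alpha_2 {\bf u}}\bigr) > s\bigl((\hat{\mathcal{B}}_{\alpha_1 {\bf u}})_{-1} + \hat{\mathcal{C}}_{\alpha_1 {\bf u}}\bigr)$, so Proposition \ref{proppart} gives that ${\bf u} \mapsto s(\mathcal{B}_{\bf u})$ is strictly monotone increasing along positive rays.

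Step (iii) is handed to us directly by hypotheses (2) and (3): $s(\mathcal{B}_{\bf u}) < 0$ for $\|{\bf u}\| \le r$ keeps $S$ at distance greater than $r$ from the origin, while $s(\mathcal{B}_{\bf u}) > 0$ for $\|{\bf u}\| \ge R$ forces $S$ to lie inside the ball of radius $R$. Combined with (i) and (ii), the intermediate value theorem guarantees that each positive ray in $\mathcal{Y}_+^0$ meets $S$ in exactly one point, so $h^{-1} : B_+ \to S$ is well defined and continuous, and the remainder of the construction (continuity of $g$ via resolvent convergence as in \cite[Th.2.25 Ch. IV.]{K}, analyticity of the eigenvector map $e$, compactness of the embedding $k$, continuity of the projection $p$, and finally Schauder's fixed point theorem) transfers without change from Theorem \ref{result1}. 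I do not foresee a genuine obstacle here: the main machinery (the Sobolev tower representation of Proposition \ref{proppart} and the positive-perturbation principle for irreducible semigroups) is already available, and the only delicate point requiring care is tracking the direction of the perturbation in step (ii).
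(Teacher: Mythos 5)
Your proposal is correct and follows essentially the route the paper intends: the paper gives no separate proof of Theorem \ref{result2}, stating only that it follows ``along similar lines'' as Theorem \ref{result1}, and your adaptation (reversing the sign of the boundary perturbation so that $s(\mathcal{B}_{\bf u})$ is strictly increasing along rays, and replacing the $\mu_0$/$K$ estimate by hypotheses (2)--(3) to locate the zero level set in the annulus $r<\|{\bf u}\|<R$) is exactly the intended modification. The continuity, Arendt--Batty perturbation, and Schauder fixed-point steps indeed carry over unchanged, so no gap remains.
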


As we have already noted in \cite{CF2}, the crucial condition that the spectral bound is monotone (and changes sign) along positive rays of the parameter space is closely related to the biologically meaningful condition that an appropriately defined net reproduction function, or in our case a net reproduction functional, changes mo\-no\-tonously (and crosses the value $1$) along positive rays of the parameter space. We now further explore this connection, and in particular we recover the familiar net reproduction number/function for age-structured (single state at birth) models.

To this end, for every fixed ${\bf u}\in\mathcal{Y}_+^0$ we define the net reproduction number as
\begin{equation}
\mathcal{R}({\bf u})=\int_0^m\beta(a,F_2({\bf u}))\exp\left\{-\int_0^a\mu(r,F_1({\bf u})(r))\,\ud r\right\}\,\ud a,\label{netrep}
\end{equation}
(notice that  $\mathcal{R}({\bf u})$ coincides with $F(0)$  as defined in \eqref{spectrb5}), 
and for ${\bf u}\equiv {\bf 0}$ (i.e. the extinction state/environment) we may naturally define the net reproduction number as
\begin{equation}
\mathcal{R}({\bf 0})=\int_0^m\beta(a,0)\exp\left\{-\int_0^a\mu(r,{\bf 0})\,\ud r\right\}\,\ud a.\label{netrep2}
\end{equation}

Note that naturally $\mathcal{R}\,:\,\mathcal{Y}_+^0\to\mathbb{R}_+$ can be considered as a functional, and so we may call it the net reproduction functional, see also \cite{Farkas2018} for more details in this direction.
Also note that  for the specific model we considered here we had to formally define $\mathcal{R}$ at the extinction steady state (zero population density) separately, since $F_1$ is not defined at ${\bf 0}$ in general. Note that in general the value $\mathcal{R}({\bf 0})$ is often denoted by simply $\mathcal{R}$ or $\mathcal{R}_0$ is usually referred to as the net reproduction number or basic reproduction number in the literature. We would like to emphasize though that for nonlinear models one really has to keep in mind that these scalar values (for example $\mathcal{R}_0$) are simply the values of a net reproduction function/functional at fixed population densities (environments). Indeed, net reproduction numbers, functions and functionals play an important role when studying the existence and the local asymptotic stability of equilibria of  physiologically structured population dynamical models, for more details see e.g. the recent papers \cite{BCR,CDF,CD,DGM,Farkas2018,FGH,FH,HT2,HT}.

We now rigorously establish the connection between the spectral bound $s(\mathcal{B}_{\bf u})$ and the net reproduction functional $\mathcal{R}({\bf u})$, as defined in \eqref{netrep}.
\begin{proposition}\label{reprod}
For every ${\bf u}\in\mathcal{Y}_+^0$ we have 
\begin{equation*}
\sigma\left(-\hat{\mathcal{C}}_{\bf u}\left(\left(\hat{\mathcal{B}}_{\bf u}\right)_{-1}\right)^{-1}\right)=\left\{0,\mathcal{R}({\bf u})\right\},
\end{equation*}
and in particular
\begin{equation}
s(\mathcal{B}_{\bf u})\gtrless 0 \iff R({\bf u})\gtrless 1.\label{spbound}
\end{equation}
\end{proposition}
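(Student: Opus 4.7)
The plan is to attack the first (spectral) statement by a direct rank-one computation in $\mathcal{X}_{-1}$, and then to deduce the sign equivalence for $s(\mathcal{B}_{\bf u})$ by combining Proposition \ref{proppart} with Thieme's theorem from \cite{HT}.

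First I would observe that $T_{\bf u}:=-\hat{\mathcal{C}}_{\bf u}\bigl((\hat{\mathcal{B}}_{\bf u})_{-1}\bigr)^{-1}$ is a bounded linear operator on $\mathcal{X}_{-1}$: indeed $\bigl((\hat{\mathcal{B}}_{\bf u})_{-1}\bigr)^{-1}$ maps $\mathcal{X}_{-1}$ isomorphically onto $\mathcal{X}$, and by \eqref{heq8} $\hat{\mathcal{C}}_{\bf u}$ maps $\mathcal{X}$ into $\mathcal{X}_{-1}$ with one-dimensional range spanned by $(\hat{\mathcal{B}}_{\bf u})_{-1}(\pi)$. Hence $T_{\bf u}$ has rank one and acts as
\begin{equation*}
T_{\bf u}(x)=\phi(x)\cdot w,\qquad w:=(\hat{\mathcal{B}}_{\bf u})_{-1}(\pi),\quad \phi(x):=\Phi_{\bf u}\bigl(((\hat{\mathcal{B}}_{\bf u})_{-1})^{-1}x\bigr).
\end{equation*}
The spectrum of any rank-one operator of this form is $\{0,\phi(w)\}$, and substituting $x=w$ immediately gives $\phi(w)=\Phi_{\bf u}(\pi)$, which by the definitions of $\Phi_{\bf u}$ in \eqref{heq6} and of $\pi$ in \eqref{pi} is precisely $\mathcal{R}({\bf u})$. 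This settles the first equality.

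For the ``in particular'' statement, by Proposition \ref{proppart} the operator $\mathcal{B}_{\bf u}$ is the part in $\mathcal{X}$ of $(\hat{\mathcal{B}}_{\bf u})_{-1}+\hat{\mathcal{C}}_{\bf u}$, and by the similarity of the extended and original semigroups these two operators share the same spectrum, in particular the same spectral bound. The homogeneous boundary condition in \eqref{heq7} forces $s\bigl((\hat{\mathcal{B}}_{\bf u})_{-1}\bigr)=-\infty$, while $\hat{\mathcal{C}}_{\bf u}$ is a positive perturbation into $\mathcal{X}_{-1}$ (its positivity follows from positivity of $\Phi_{\bf u}$ together with the fact $-(\hat{\mathcal{B}}_{\bf u})_{-1}(\pi)\in(\mathcal{X}_{-1})_+$, as already established in the proof of Theorem \ref{result1}). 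These are exactly the ingredients required by Thieme's theorem from \cite{HT}, which asserts that the sign of $s\bigl((\hat{\mathcal{B}}_{\bf u})_{-1}+\hat{\mathcal{C}}_{\bf u}\bigr)$ agrees with the sign of $r(T_{\bf u})-1=\mathcal{R}({\bf u})-1$. Combining this with the first part yields \eqref{spbound}.

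The step I expect to be the most delicate is verifying that the precise hypotheses of Thieme's theorem as formulated in \cite{HT} are met by our extrapolation-space splitting, and that the ``next-generation'' operator it produces coincides with the rank-one operator $T_{\bf u}$ above. The extrapolation framework introduced precisely for this purpose (and already exploited in the proof of Proposition \ref{proppart}) is what allows the identification to go through; once it does, the proposition reduces essentially to the rank-one spectral computation sketched above.
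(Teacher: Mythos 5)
Your proposal is correct and follows essentially the same route as the paper: the paper likewise applies Theorem 3.5 of \cite{HT} to the splitting $\left(\hat{\mathcal{B}}_{\bf u}\right)_{-1}+\hat{\mathcal{C}}_{\bf u}$ in $\mathcal{X}_{-1}$ (using Proposition \ref{proppart} and similarity to identify the spectral bounds) and then computes the spectrum of $-\hat{\mathcal{C}}_{\bf u}\left(\left(\hat{\mathcal{B}}_{\bf u}\right)_{-1}\right)^{-1}$ by exploiting its rank-one structure, obtaining $\{0,\Phi_{\bf u}(\pi)\}=\{0,\mathcal{R}({\bf u})\}$ exactly as in your computation of $\phi(w)$. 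The only difference is presentational: the paper writes out the eigenvalue equations explicitly rather than quoting the general rank-one formula, and, like you, it leaves the verification of Thieme's hypotheses as a stated check.
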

\begin{proof}
To establish the relationship \eqref{spbound} for ${\bf u}\in\mathcal{Y}_+^0$ we apply Theorem 3.5 from \cite{HT} to the extended generator $\left(\left(\hat{\mathcal{B}}_{\bf u}\right)_{-1}+\hat{\mathcal{C}}_{\bf u}\right)$ in $\mathcal{X}_{-1}$.  Note that one can verify that the operators $\left(\hat{\mathcal{B}}_{\bf u}\right)_{-1},\,\hat{\mathcal{C}}_{\bf u}$ satisfy the assumptions of Theorem 3.5 in \cite{HT}.
Hence applying Theorem 3.5 from \cite{HT} we obtain
\begin{equation}
s(\mathcal{B}_{\bf u})=s\left(\left(\hat{\mathcal{B}}_{\bf u}\right)_{-1}+\hat{\mathcal{C}}_{\bf u}\right)\gtrless 0\iff r\left(-\hat{\mathcal{C}}_{\bf u}\left(\left(\hat{\mathcal{B}}_{\bf u}\right)_{-1}\right)^{-1}\right)\gtrless 1.
\end{equation}
It remains to show that
\begin{equation*}
r\left(-\hat{\mathcal{C}}_{\bf u}\left(\left(\hat{\mathcal{B}}_{\bf u}\right)_{-1}\right)^{-1}\right)=\mathcal{R}({\bf u}).
\end{equation*}
To this end note that the equation
\begin{align}
 \left(-\hat{\mathcal{C}}_{\bf u}\left(\left(\hat{\mathcal{B}}_{\bf u}\right)_{-1}\right)^{-1}\right)\,p & = \lambda\,p \\ \nonumber
& =\Phi_{\bf u}\left(\left(\left(\hat{\mathcal{B}}_{\bf u}\right)_{-1}\right)^{-1}p\right)\left[ \left(\hat{\mathcal{B}}_{\bf u}\right)_{-1}\left(\exp\left\{-\int_0^\cdot\mu(r,F_1({\bf u})(r))\,\ud r\right\}\right) \right],
\end{align}
shows that $\lambda=0$ is an eigenvalue with corresponding eigenvectors $p=\left(\hat{\mathcal{B}}_{\bf u}\right)_{-1}w$ (spanning an infinite dimensional subspace), such that $w\in\mathcal{X}$ and
\begin{equation*}
\int_0^m\beta(a,F_2({\bf u}))w(a)\,\ud a=0,
\end{equation*}
holds.

On the other hand if $\lambda\ne 0$ then we have that
\begin{equation*}
p=\alpha\left[\left(\hat{\mathcal{B}}_{\bf u}\right)_{-1}\left(\exp\left\{-\int_0^\cdot\mu(r,F_1({\bf u})(r))\,\ud r\right\}\right)\right]=:\alpha\,v,
\end{equation*}
for some $\alpha\ne 0$, which together with
\begin{equation*}
\lambda\,\alpha\,v=\Phi_{\bf u}\left(\left(\left(\hat{\mathcal{B}}_{\bf u}\right)_{-1}\right)^{-1}\alpha\,v\right)\,v=
\alpha \, \Phi_{\bf u}\left(\left(\left(\hat{\mathcal{B}}_{\bf u}\right)_{-1}\right)^{-1} v\right)\,v,
\end{equation*}
yields
\begin{equation*}
\lambda=\Phi_{\bf u}\left(\left(\left(\hat{\mathcal{B}}_{\bf u}\right)_{-1}\right)^{-1} v\right)=\Phi_{\bf u}\left(\exp\left\{-\int_0^\cdot\mu(r,F_1({\bf u})(r))\,\ud r\right\}\right).
\end{equation*}
Hence we have shown that (below $\sigma$ stands for the spectrum)
\begin{align}
\sigma\left(-\hat{\mathcal{C}}_{\bf u}\left(\left(\hat{\mathcal{B}}_{\bf u}\right)_{-1}\right)^{-1}\right) = \left\{0,\Phi_{\bf u}\left(\pi\right)\right\}=\left\{0,\mathcal{R}({\bf u})\right\},
\end{align}
which establishes the assertions of Proposition \ref{reprod}. 
\end{proof}

\begin{remark}
Note that by appropriately defining the operator $\mathcal{B}_{\bf 0}$ we can also establish that $s(\mathcal{B}_{\bf 0})\gtrless 0 \iff R({\bf 0})\gtrless 1$.
\end{remark}

\begin{remark}
We would like to emphasize that in the proof of Proposition \ref{reprod} we did not really use that $F_1,F_2$ assume the particular form as in \eqref{heq3}.
\end{remark}

\begin{remark}
We recall from \cite{GM} the classic nonlinear (Gurtin-McCamy) age-structured po\-pu\-lation dynamical model here.
\begin{align}
p_t(a,t)+p_a(a,t)= & -\mu\left(a,\int_0^m p(r,t)\,\ud r\right)p(a,t),\quad a\in (0,m), \label{GM1} \\
p(0,t)=& \int_0^m \beta\left(a,\int_0^m p(r,t)\,\ud r\right)p(a,t)\,\ud a, \quad t>0. \label{GM2}
\end{align}

Note that the proof of Proposition \ref{reprod} can be directly adapted for this model (with a one dimensional nonlinearity, i.e. $\mathcal{Y}=\mathbb{R}$, since the environment in\-di\-vi\-duals are experiencing is simply determined by the total population size $P(t)=\int_0^m p(a,t)\,\ud a$), and of course also for the basic linear age-structured McKendrick model (i.e. when $\beta$ and $\mu$ do not depend on the total population size $P(t)$). 

Thus we have  rigorously justified the biologically inspired definition of the net reproduction number
\begin{equation}
\mathcal{R}=\int_0^m\beta(a)\exp\left\{-\int_0^a \mu(r)\,\ud r\right\}\,\ud a,
\end{equation}
(appearing frequently in the literature, see e.g. \cite{Iannelli}) for the linear age-structured McKendrick model; and similarly the net reproduction function, which, for the Gurtin-McCamy (non-linear) age-structured model \eqref{GM1}-\eqref{GM2} reads as:
\begin{equation}
\mathcal{R}(P)=\int_0^m\beta(a,P)\exp\left\{-\int_0^a \mu(r,P)\,\ud r\right\}\,\ud a.
\end{equation}
Also note that analogously one arrives at the definition of the net reproduction function
\begin{equation}
\mathcal{R}(P)=\int_0^m\frac{\beta(s,P)}{\gamma(s,P)}\exp\left\{-\int_0^s \frac{\mu(r,P)}{\gamma(r,P)}\,\ud r\right\}\,\ud s,
\end{equation}
for the size-structured version of the Gurtin-McCamy model \eqref{GM1}-\eqref{GM2}, see e.g. \cite{FH}, where stability criteria for positive steady states were also established in terms of the derivative of the net reproduction function above.
\end{remark}

\section{Concluding remarks}

In this paper we have treated the (positive) steady state problem of a nonlinear structured population  model with a nonlinear and non-local boundary condition. In particular we have showed that the growth behaviour of the spectral bound of the parametrised family of linear operators arising in the steady state formulation can be controlled by boundary perturbations only. This way we have demonstrated how our general framework developed in \cite{CF2} can be applied to treat classes of nonlinear models with  single state at birth, and in particular with fertility/reproduction controlled dynamics. 

We note that for certain classes of models, a more direct fixed point approach as employed previously e.g. in \cite{FarkasHinow} can be also applied. The question is whether one formulates the fixed point map on the state space as e.g. in \cite{FarkasHinow}, or on the parameter (environment) space. Sometimes, in particular when it is not feasible to obtain an implicit formula for the steady state, it may not be possible to formulate a fixed point map on the state space. Another disadvantage of formulating the fixed point problem on the state space is that often fixed point theorems rely on contraction arguments, and so in the context of population models special care has to be taken to avoid the extinction steady state, as e.g. in \cite{FarkasHinow}, where a fixed point theorem on conical shells in Banach spaces was employed.

For these reasons in general we prefer to formulate a fixed point problem on the parameter space, and make use of the spectral properties of the family of operators arising from the parametrisation (using the environment as parameter), partly because then this has a natural biological intepretation, too. In fact, the goal of the present work was to show how the spectral bound of the family of operators can be controlled by boundary perturbations only, and the results here can be readily extended to nonlinear size-structured models with non-local boundary condition, see e.g. \cite{CF2} or even  \cite{CF3} (including a diffusion operator, too) in this direction.

The general idea of lifting a problem into the extrapolated space, where the boundary condition becomes a bounded perturbation of the generator is very powerful and can be applied to other models, as well. To name another concrete example, let us consider here the following general selection-mutation model (see also \cite{CP} for more details).
\begin{align}
u_t(l,a,t)+u_a(l,a,t) &=-\mu(E_1[u],l,a)u(l,a,t),\nonumber \\
u(l,0,t) &=\int_0^\infty\int_{\hat{l}}^\infty b(l,\hat{l})\beta(E_2[u],\hat{l},a)u(\hat{l},a,t)\, \ud a\,\ud\hat{l}, \label{selmut} \\
u(l,a,0) & =u_0(l,a). \nonumber
\end{align}
In the model above $u(l,a,t)$ denotes the density of individuals of age $a$ and maturation age $l$ at time $t$. $E_1$ and $E_2$ are the environmental operators, as described in Section 1. $\mu$ denotes the mortality rate, $\beta$ the fertility function and $b$ is the probability density function describing the mutation, that is $\int_{l_1}^{l_2} b(l,\hat{l})\, \ud l$ is the probability that the offspring of an individual of age at maturity $\hat{l}$ has maturity age $l\in (l_1,l_2)$. 
 
In \cite{CP} the existence and uniqueness of a positive steady state of model \eqref{selmut} was established, for the case when  $E_1\equiv E_2$, and when the mortality $\mu$ is a strictly monotone increasing function of its first variable, while the fertility $\beta$ is a strictly monotone decreasing function of its first variable, as well. In contrast, in \cite{CF2} we treated the case of $E_1\equiv E_2\equiv E[u(\cdot,\cdot,t)]=(P(t),Q(t))$ (i.e. two-dimensional nonlinearity),  where 
\begin{equation}
P(t)=\int_0^{a_m}\int_0^lu(l,a,t)\, \ud a\, \ud l,\quad Q(t)=\int_0^{a_m}\int_l^{a_m}u(l,a,t)\, \ud a\,\ud l,
\end{equation}
(note the finite maximal age) denote the juvenile and adult populations respectively, but crucially without any further monotonicity assumptions on $\mu$ and $\beta$.

Here we point out (without elaborating the details) that following the lines presented in the previous section, the positive steady state problem for  model \eqref{selmut} could be treated, for example in case of the following choice of $E_1$ and $E_2$: 
\begin{align*}
E_1[u(l,a,t)]:=& \frac{\displaystyle\int_0^{a_m}\left(\alpha\int_0^au(l,x,t)\,\ud x+\int_a^{a_m}u(l,x,t)\,\ud x\right)\,\ud l}{\displaystyle\int_0^{a_m}\int_0^{a_m}w(x)u(l,x,t)\,\ud x\,\ud l},\\ 
E_2[u(l,a,t)]:=& \displaystyle\int_0^{a_m}\int_l^{a_m}u(l,a,t)\, \ud a\,\ud l\left(=Q(t)\right),
\end{align*}
where $w$ is some non-negative weight function and $\alpha\in [0,1]$.

Finally we mention that in the future we intend to explore the possibility to use our framework in the dynamic
setting, that is, we aim to investigate the possibility of using the (re)formulation \eqref{problem} not just to prove the existence of positive steady states, but also to establish further qualitative results, for example results concerning the global (in)stability of the extinction steady state.  We anticipate that lifting the problem into the extra\-po\-lated space will prove to be fruitful again.  We also note that it will be worthwhile to explore the connection between our general approach and the theory of monotone dynamical systems and in particular persistence theory (see e.g. \cite{SmithThieme}), and to compare the advantages/disadvantages of the two approaches.

\section*{Acknowledgments}
The authors were partially supported by the research project MTM2014-52402-C03-2P.

\end{document}